\newtheorem{theorem}{Theorem}[section]
\newtheorem{proposition}[theorem]{Proposition}
\newtheorem{corollary}[theorem]{Corollary}
\newtheorem{question}[theorem]{Question}
\theoremstyle{definition}
\newtheorem{definition}[theorem]{Definition}
\theoremstyle{remark}
\newtheorem{remark}[theorem]{Remark}
\newcommand{\CC}{\mathbb{C}}
\newcommand{\ZZ}{\mathbb{Z}}
\newcommand{\PP}{\mathbb{P}}
\newcommand{\A}{\mathcal{A}}
\newcommand{\B}{\mathcal{B}}
\newcommand{\C}{\mathcal{C}}
\newcommand{\D}{\mathcal{D}}
\newcommand{\R}{\mathcal{R}}
\DeclareMathOperator{\tub}{Tub}
\begin{document}

\title{Zariski pairs of conic-line arrangements with a unique conic.}

\author[S. Bannai]{Shinzo Bannai}
	\address[S. Bannai]{
		Department of Applied Mathematics,
		Faculty of Science,
		Okayama University of Science,
		1-1 Ridai-cho, Kita-ku,
		Okayama 700-0005 Japan.
	}
	\email{bannai@ous.ac.jp}

\author[B. Guerville-Ball\'e]{Beno\^it Guerville-Ball\'e}
	\address[B. Guerville-Ball\'e]{}
	\email{benoit.guerville-balle@math.cnrs.fr }

\author[T. Shirane]{Taketo Shirane}
	\address[T. Shirane]{
		Department of Mathematical Sciences,
		Faculty of Science and Technology,
		Tokushima University,
		2-1 Minamijyousanjima-cho,
		Tokushima 770-8506, Japan.
	}
	\email{shirane@tokushima-u.ac.jp}

\thanks{}

\subjclass[2020]{
	14H45, 
	14H30, 
	32Q55, 
	54F65, 
}

\begin{abstract}
	In this note, we present two pairs of conic-line arrangements admitting a unique conic and that form Zariski pairs, both of degree $9$. Their topologies are distinguished using the connected numbers.
\end{abstract}

\maketitle


\section{Introduction}

A \emph{Zariski pair} is a couple of two algebraic plane curves~$\C_1$ and~$\C_2$ --not necessarily irreducible-- such that there exists a homeomorphism form $(\tub(\C_1), \C_1)$ to $(\tub(\C_2),\C_2)$ but no homeomorphism from $(\CC\PP^2,\C_1)$ to $(\CC\PP^2,\C_2)$, where $\tub(\C_i)$ is a tubular neighborhood of~$\C_i$. We often refer to these properties as, having the same \emph{combinatorics} and having different \emph{embedded topology}, respectively. As first noted in~\cite{ArtCogTok}, the first condition can be replaced by the equivalence of combinatorial data given by the local type of each singular point of $\C_i$ and the correspondance between the irreducible components of~$\C_i$ and the local branches of each singular point. The first example of Zariski pair is, of course, due to Zariski~\cite{Zar1,Zar2,Zar3}. It is formed by two irreducible sextics with six cusps, where in the first curve the cusps lie on a conic and not in the second one. The non-existence of a homeomorphism between their embedded topology is obtained by showing that the fundamental groups of their complements $\CC\PP^2\setminus\C_i$ are non-isomorphic --one is Abelian and the other is not.

In~\cite{Deg:isotopy_classification}, Degtyarev proves that the rigid isopoty class of curves of degree at most 5 is determined by the combinatorial data. He also classifies in~\cite{Deg:deformations_sextics} the isotopy classes of the curves of degree~$6$ with simple singularities. In particular, his work implies that the minimal degree for there to exist a Zariski pair is $6$, which is realized by the seminal example of Zariski formed by irreducible curves. On the other end of the spectrum of algebraic plane curves, we find line arrangements whose irreducible components are all of degree~$1$. Currently, the smallest known Zariski pairs of line arrangement have~$11$ lines~\cite{ACCM,Gue:LLN}. By the work of Nazir, Yoshinaga and Fei~\cite{NazYos,Fei}, we know that the smallest possible degree for a Zariski pair of line arrangement is~$10$. The existence of such Zariski pair with $10$~lines is still an open question. In between, one has reduced curves. In~\cite{Art:couples}, Artal gives the first example of a cubic-line Zariski pair formed by one smooth cubic and three inflexional tangents --so they are curves of degree~$6$. In~\cite{Tok:elliptic_surfaces}, Tokunaga constructs two Zariski pairs of conic-lines arrangements of degree~$7$. The first is formed by curves with $3$~conics and $1$~line, while the curves in the second one have $2$~conics and $3$~lines. Since these examples, several other conic-lines Zariski pairs have been discovered, but none of them admits a unique conic. The purpose of this note is to provide two examples of such conic-line Zariski pair with a unique conic. They are both of degree~$9$ with only nodes, ordinary triple points and tacnodes as singularities.

\section{Conic-line Zariski pairs with a unique conic}

Before recalling the definition of the connected number and giving the two examples, we introduce some vocabulary to formulate more precisely the goal of this note.

\begin{definition}
	A \emph{$(d_1,\dots,d_r)$-arrangement} is a reduced algebraic curve with $d_i$ irreducible components of degree $i$, all of them being smooth.
\end{definition}

For example, the Zariski pair given by Artal in~\cite{Art:couples} is formed by $(3,0,1)$-arrangements, while the two examples of Tokunaga in~\cite{Tok:elliptic_surfaces} are composed by $(1,3)$-arrangements and $(3,2)$-arrangements. With this notation a conic-line arrangement with a unique conic is a $(k,1)$-arrangement.

\medskip

We could easily construct a Zariski pair of $(k,1)$-arrangements by taking a Zariski pair of line arrangements and adding a conic intersecting transversally the line arrangements. Nevertheless, in such a trivial example, the conic does not play a role in the difference of topologies. It appears natural to define a minimal Zariski pair as follow.

\begin{definition}
	A Zariski pair $\C_1$, $\C_2$ is \emph{minimal} if no pair of proper sub-curves of $\C_1$ and $\C_2$ is a Zariski pair.
\end{definition}

Using the vocabulary previously introduced, one can reformulate the goal of this paper as follow:
\begin{center}
	\emph{Construct a minimal Zariski pair of $(k,1)$-arrangements, for a small $k$.}
\end{center}

\medskip

\subsection{The connected number}

The embedded topologies of the following examples are distinguished using the \emph{connected number} introduced by the third author in~\cite{Shi:connected_numbers}.
It is an invariant of the embedded topology of reducible algebraic plane curves in smooth varieties. In the following, for simplicity, we adapt the definition and main result of~\cite{Shi:connected_numbers} to the particular case of algebraic plane curves in~$\CC\PP^2$.

\begin{definition}\label{def:connected_numbers}
	Let~$B$ be an algebraic plane curve of $\CC\PP^2$ and let $\Phi:X \rightarrow \CC\PP^2$ be a cyclic cover branched over~$B$. Let~$C$ be an algebraic plane curve of $\CC\PP^2$ such that~$B$ and~$C$ do not have common irreducible components, and $C \setminus B$ is connected. The \emph{connected number} of~$C$ for the cover~$\Phi$ is the number of connected components of $\Phi^{-1}(C \setminus B)$. It is denoted by~$c_\Phi(C)$.
\end{definition}

As explicitly described in the following theorem, the connected number is an invariant of the homeomorphism type of $(\CC\PP^2,B \cup C)$ which respects an order on the irreducible components of the curve~$B \cup C$.

\begin{theorem}\label{thm:invariance}
	Let~$B_1, B_2$ be two curves of $\CC\PP^2$ and let $m \geq 2$ an integer. For each $i\in\{1,2\}$, let $\Phi_i:X_i \rightarrow \CC\PP^2$ be a $\ZZ_m$-cover induced by a surjection $\theta_i:\pi_1(\CC\PP^2\setminus B_i) \twoheadrightarrow \ZZ_m$. Let~$C_1$ be as in Definition~\ref{def:connected_numbers}, and assume that there exists a homeomorphism $h:\CC\PP^2\rightarrow\CC\PP^2$ and an automorphism $\sigma:\ZZ_m \rightarrow \ZZ_m$ such that $h(B_1)=B_2$ and $\sigma \circ \theta_2 \circ h_* = \theta_1$. If we denote $C_2=h(C_1)$ then $c_{\Phi_1}(C_1) = c_{\Phi_2}(C_2)$.
\end{theorem}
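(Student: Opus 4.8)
The plan is to show that the homeomorphism $h$ lifts to a homeomorphism between the two cyclic covers, which then carries connected components of the preimage of $C_1 \setminus B_1$ bijectively to those of $C_2 \setminus B_2$.

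The plan is to show that the homeomorphism $h$ lifts to a homeomorphism between the two cyclic covers over the complements of the branch loci, and that this lift carries $\Phi_1^{-1}(C_1 \setminus B_1)$ onto $\Phi_2^{-1}(C_2 \setminus B_2)$. Since a homeomorphism preserves the number of connected components, the equality $c_{\Phi_1}(C_1) = c_{\Phi_2}(C_2)$ then follows at once.

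First I would pass to the unbranched situation. Write $B_i' = \CC\PP^2 \setminus B_i$ and $E_i = \Phi_i^{-1}(B_i')$; then $\Phi_i|_{E_i} \colon E_i \to B_i'$ is the connected unramified $\ZZ_m$-cover associated with the subgroup $\ker\theta_i \subset \pi_1(B_i')$, so $(\Phi_i|_{E_i})_* \pi_1(E_i) = \ker\theta_i$. Since $C_i \setminus B_i$ is disjoint from $B_i$, the locus $\Phi_i^{-1}(C_i \setminus B_i)$ lies entirely inside $E_i$, so everything relevant takes place over the complements. Moreover $h$ restricts to a homeomorphism $h \colon B_1' \to B_2'$, and because $h(C_1) = C_2$ and $h(B_1) = B_2$ we get $h(C_1 \setminus B_1) = C_2 \setminus B_2$; in particular $C_2 \setminus B_2$ is connected, so $C_2$ satisfies the hypotheses of Definition~\ref{def:connected_numbers}.

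Next I would use the identity $\sigma \circ \theta_2 \circ h_* = \theta_1$ to match the two covers. As $\sigma$ is an automorphism, this shows that $h_*$ sends $\ker\theta_1$ into $\ker\theta_2$, and applying the same reasoning to $h^{-1}$ gives the reverse inclusion, so $h_*$ restricts to an isomorphism from $\ker\theta_1$ onto $\ker\theta_2$. Consequently $(h \circ \Phi_1)_* \pi_1(E_1) = h_*(\ker\theta_1) = \ker\theta_2 = (\Phi_2|_{E_2})_* \pi_1(E_2)$, so by the lifting criterion for covering spaces the map $h \circ (\Phi_1|_{E_1}) \colon E_1 \to B_2'$ lifts through $\Phi_2|_{E_2}$ to a continuous map $\tilde h \colon E_1 \to E_2$ with $\Phi_2 \circ \tilde h = h \circ \Phi_1$. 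Lifting $h^{-1}$ likewise to $\tilde g \colon E_2 \to E_1$ and choosing the base points compatibly, the uniqueness of lifts forces $\tilde g \circ \tilde h$ and $\tilde h \circ \tilde g$ to be the identities, so $\tilde h$ is a homeomorphism.

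Finally, the relation $\Phi_2 \circ \tilde h = h \circ \Phi_1$ combined with $h(C_1 \setminus B_1) = C_2 \setminus B_2$ shows that $\tilde h$ restricts to a homeomorphism of $\Phi_1^{-1}(C_1 \setminus B_1)$ onto $\Phi_2^{-1}(C_2 \setminus B_2)$, whence these two loci have the same number of connected components, which is precisely the desired equality. The main obstacle is the covering-theoretic lifting step: one must check carefully that $h_*$ genuinely preserves the subgroups $\ker\theta_i$ (so that the lift exists), and that the resulting lift is an actual homeomorphism commuting with the projections rather than a mere continuous map. Once $\tilde h$ is in hand, the remaining reasoning is purely formal.
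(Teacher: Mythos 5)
Your argument is correct. Note that the paper itself offers no proof of Theorem~\ref{thm:invariance}: it is imported from~\cite{Shi:connected_numbers} (stated there for covers of smooth varieties) and merely specialized to $\CC\PP^2$, so there is nothing in the text to compare against line by line. What you have written is the expected argument and it holds up: since $\Phi_i^{-1}(C_i\setminus B_i)$ lies entirely over the complement $\CC\PP^2\setminus B_i$, only the unramified part of the branched cover matters; the hypothesis $\sigma\circ\theta_2\circ h_*=\theta_1$ with $\sigma$ an automorphism gives exactly $h_*(\ker\theta_1)=\ker\theta_2$ (your two-sided inclusion via $h$ and $h^{-1}$ is the right way to see this); and the lifting criterion plus uniqueness of pointed lifts upgrades $h$ to a homeomorphism $\tilde h$ of the unramified covers intertwining the projections, which then matches up the connected components of the two preimages. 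The only points worth making explicit are that $E_i$ is connected because $\theta_i$ is surjective (needed for the uniqueness-of-lifts step) and that all spaces involved are locally path-connected, both of which are immediate here.
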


To distinguish the two pairs of the present note, we only use surjection~$\theta_i$ on~$\ZZ_2$, so the automorphism~$\sigma$ can only be the identity. Also, the curves~$C$ will be nodal, so we can reformulate~\cite[Corollary~2.6]{Shi:connected_numbers} as follows.

\begin{proposition}\label{propo:computation}
	Let~$B$ and~$C$ be two curves of $\CC\PP^2$ such that~$B$ is of even degree, $C$~is nodal with smooth irreducible components, and~$B$ and~$C$ intersect outside the nodes of~$C$. If~$\Phi_{B}$ is the double cover branched at~$B$ then one has that $c_{\Phi_B}(C)=2$ if and only if there exists a divisor~$D$ on $\CC\PP^2$ with $2 D_{\mid C} = B_{\mid C}$, and $c_{\Phi_B}(C)=1$ otherwise.
\end{proposition}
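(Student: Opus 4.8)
The plan is to identify $c_{\Phi_B}(C)$ with the triviality of an unramified double cover, and then to translate that triviality into the existence of a square root of the defining equation of $B$ along $C$, which is exactly what the divisor equation $2D_{|C}=B_{|C}$ records.

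First I would note that, since $\Phi_B$ is branched precisely along $B$ and $B$ meets $C$ only at smooth points of $C$, the restriction $\Phi_B^{-1}(C\setminus B)\to C\setminus B$ is an unramified double cover of the connected curve $C\setminus B$. An unramified double cover of a connected space has one or two connected components, and it has two exactly when it is the trivial cover; hence $c_{\Phi_B}(C)\in\{1,2\}$, and $c_{\Phi_B}(C)=2$ if and only if this cover splits. Writing $B=\{F=0\}$ with $\deg F=2k$ even and fixing a line $\{\ell=0\}$ meeting $C$ transversally away from $B$, the double cover is given, over the locus $\ell\neq 0$, by $y^2=\varphi$ with $\varphi=(F/\ell^{2k})_{|C}\in\CC(C)$; therefore it splits over $C\setminus B$ if and only if $\varphi$ is a square in $\CC(C)$, i.e. there is a rational function $h$ on $C$, regular and non-vanishing on $C\setminus B$, with $h^2=\varphi$.

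The bridge to divisors is the section $s=h\,\ell^{k}$ of the line bundle $\mathcal{O}_C(k)=\mathcal{O}_{\CC\PP^2}(k)_{|C}$, which satisfies $s^2=F_{|C}$. For the converse direction, a splitting produces such an $s$; since $B$ avoids the nodes of $C$, the square root $h$ is a unit on a neighbourhood of each node and the chosen sign is globally consistent because $C\setminus B$ is connected, so $s$ is a genuine global section of $\mathcal{O}_C(k)$ over all of $C$, with divisor $E:=\tfrac12 B_{|C}$; in particular $B_{|C}$ has even multiplicities. Because $C$ is a plane curve, its ideal sheaf is a line bundle and $H^1(\CC\PP^2,\mathcal{O}(k-\deg C))=0$, so the restriction $H^0(\CC\PP^2,\mathcal{O}(k))\to H^0(C,\mathcal{O}_C(k))$ is surjective; lifting $s$ to a form $G$ of degree $k$ yields a curve $D=\{G=0\}$ with $D_{|C}=E$, that is $2D_{|C}=B_{|C}$.

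For the direct implication, suppose $2D_{|C}=B_{|C}$; comparing degrees forces $\deg D=k$, and if $G$ defines $D$ then $\bigl((G/\ell^{k})_{|C}\bigr)^2$ and $\varphi$ have the same divisor on $C$, hence differ by a locally constant factor. As $G$ is a single global polynomial, the resulting square root of $\varphi$ is automatically consistent across the nodes, so $\varphi$ is a square and the cover splits. The step I expect to be the main obstacle is precisely this control over the nodes of $C$: one must guarantee that the square root is a single global object on the (possibly reducible, non-normal) curve rather than merely a square root on each branch. Both the use of a global defining polynomial in one direction and the connectedness of $C\setminus B$ together with the vanishing of $H^1(\CC\PP^2,\mathcal{O}(k-\deg C))$ in the other are what make this work.
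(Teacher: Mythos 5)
Your proof is correct in substance, but be aware that the paper does not actually prove Proposition~\ref{propo:computation}: it is imported, in a simplified form, from \cite[Corollary~2.6]{Shi:connected_numbers}, so yours is a self-contained reconstruction rather than a variant of an argument appearing in the text. The route you take is the natural one and matches the general mechanism behind splitting criteria for double covers: the preimage of the connected curve $C\setminus B$ is an unramified double cover, hence has two components exactly when it is trivial; triviality amounts to $F_{\mid C}$ being the square of a section of $\mathcal{O}_C(\deg B/2)$ over $C\setminus B$, which extends across $B\cap C$ (forcing the local intersection multiplicities to be even); and the surjectivity of $H^0(\CC\PP^2,\mathcal{O}(k))\to H^0(C,\mathcal{O}_C(k))$, coming from the vanishing of $H^1$ of line bundles on $\CC\PP^2$, converts that section into the divisor $D$. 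This is exactly what makes the criterion of Remark~\ref{rmk:connected_numbers} effective in the two examples.

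Two steps deserve to be written out more carefully than you do. First, ``$\varphi$ is a square in $\CC(C)$'' is not the right condition when $C$ is reducible, since $\CC(C)$ is then a product of fields and a componentwise square root need not glue at the nodes; the correct condition is the one you state after your ``i.e.'', a square root that is regular on the non-normal curve $C\setminus B$. This gluing at the nodes is precisely where the hypothesis that $B$ avoids the nodes of $C$ enters, and it is what makes the invariant non-trivial for a reducible $C$ such as the triangle of the first example. Second, in the direct implication, the claim that the square root is ``automatically consistent across the nodes'' needs justification: the locally constant function $(G/\ell^k)^2/\varphi$ takes the same value on two components meeting at a node because it can be evaluated there (one has $F\neq 0$ at the node by hypothesis, and $G\neq 0$ there because $2\operatorname{div}_C(G)=\operatorname{div}_C(F)$ is supported on $B\cap C$), and connectedness of $C\setminus B$ then makes it a global constant whose square root rescales $G/\ell^k$ into a genuine splitting. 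With these points, and the harmless normalization that $D$ is effective and given by a single form $G$, your argument is complete.
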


\begin{remark}\label{rmk:connected_numbers}
	If, for each $P \in B \cap C$, the local intersection multiplicity of~$B$ and~$C$ is~$2$, then the existence of the divisor~$D$ corresponds to the existence of a curve~$\D$ of degree~$\deg (B) / 2$ passing through all the points of~$C \cap B$ which do not contain irreducible components of~$C$.
\end{remark}

\subsection{First example}

Let us consider the algebraic plane curves constructed as follows. First, let~$C$ be the conic defined by the equation $x^2 + y^2 + z^2 - 2xy - 2xz - 2yz = 0$, then consider the four lines $L_1: x = 0$, $L_2: y = 0$, $L_3: x + y - 5z = 0$ and  $L_4: 3x - 2y - 10z = 0$. Last, we define two sets of three lines:\\[5pt]
\indent
\begin{tabular}{cllll}
	$\bullet$ & $L^1_5: x + y + 5z = 0$, \quad 	& $L^1_6: 3x + 3y - 10z = 0$ \quad	& and \quad & $L^1_7: 2x - 3y + 10z = 0$; \\
	$\bullet$ & $L^2_5: 3x - y - 5z = 0$, \quad 	& $L^2_6: 3x + y - 10z = 0$ \quad 	& and \quad & $L^2_7: 6x + y - 10z = 0$.\\[5pt]
\end{tabular}

We denote by~$\C$ the triangle formed by the lines $L_1$, $L_2$ and $L_3$, and by~$\B^1$ (resp.~$\B^2$) the curve defined by~$C$ and~$L_4$ together with $L^1_5$, $L^1_6$ and $L^1_7$ (resp. $L^2_5$, $L^2_6$ and $L^2_7$).

\medskip

The curves~$\C \cup \B^i$, represented in Figure~\ref{fig:ZP1}, share the same combinatorics whose singular points are the ones described below. The intersections between the irreducible components that are not mentioned below are ordinary double points, i.e. nodes.\\[5pt]
\indent
\begin{tabular}{cll}
	$\bullet$ 	&	Triple points:	& $\{L_1,L_4,L^i_5\}$, $\{L_1,L^i_6,L^i_7\}$, $\{L_2,L^i_5,L^i_7\}$, $\{L_2,L_4,L^i_6\}$, $\{L_3,L^i_5,L^i_6\}$,\\
					&							& $\{L_3,L^i_7,C\}$, $\{L_3,L_4,C\}$,\\
	$\bullet$ 	& Tacnodes:			& $\{L_1,C\}$, $\{L_2,C\}$.\\[5pt]
\end{tabular}

\begin{figure}[h!]
	\includegraphics[scale=0.14]{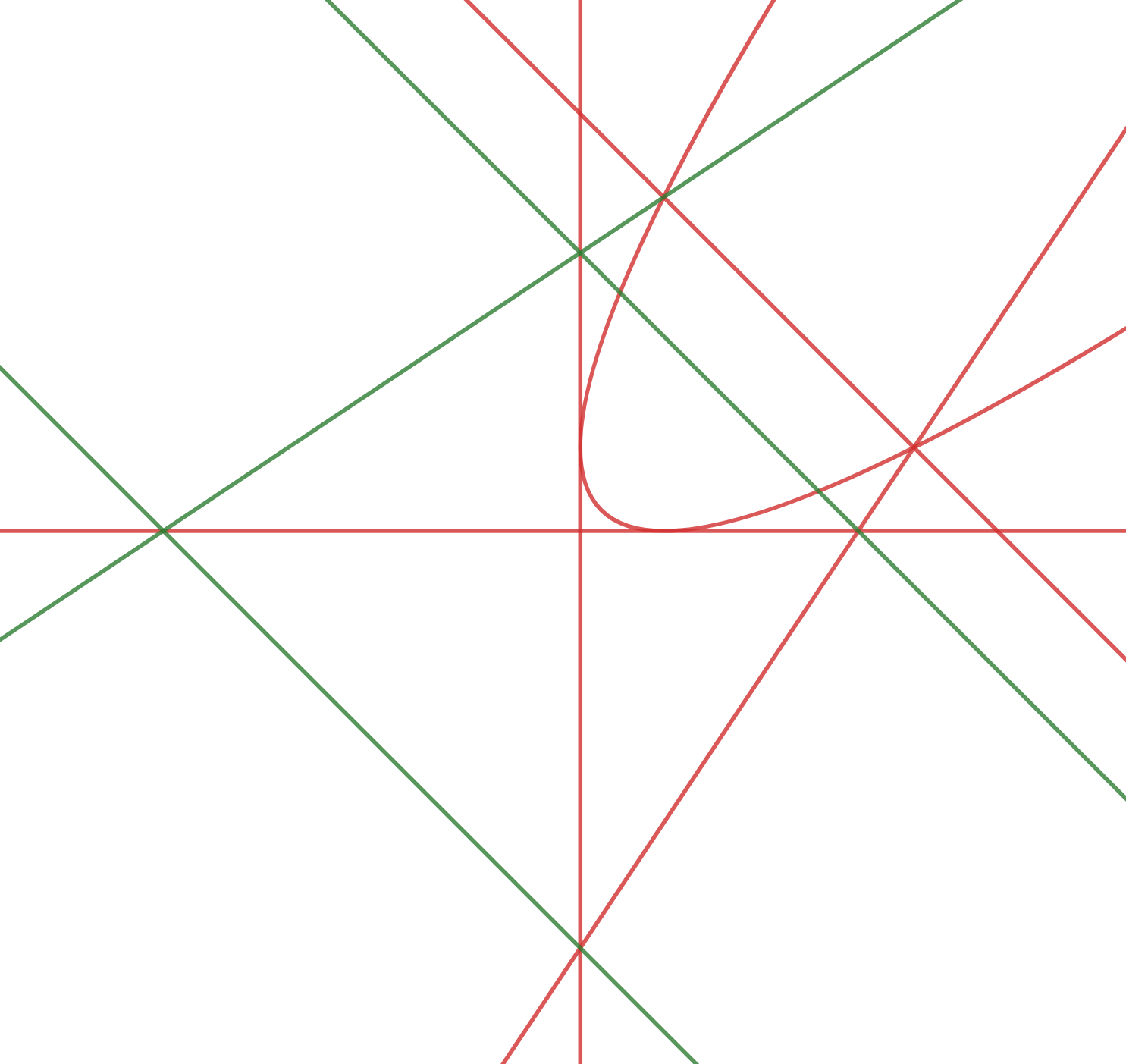}
	\hspace{3cm}
	\includegraphics[scale=0.15]{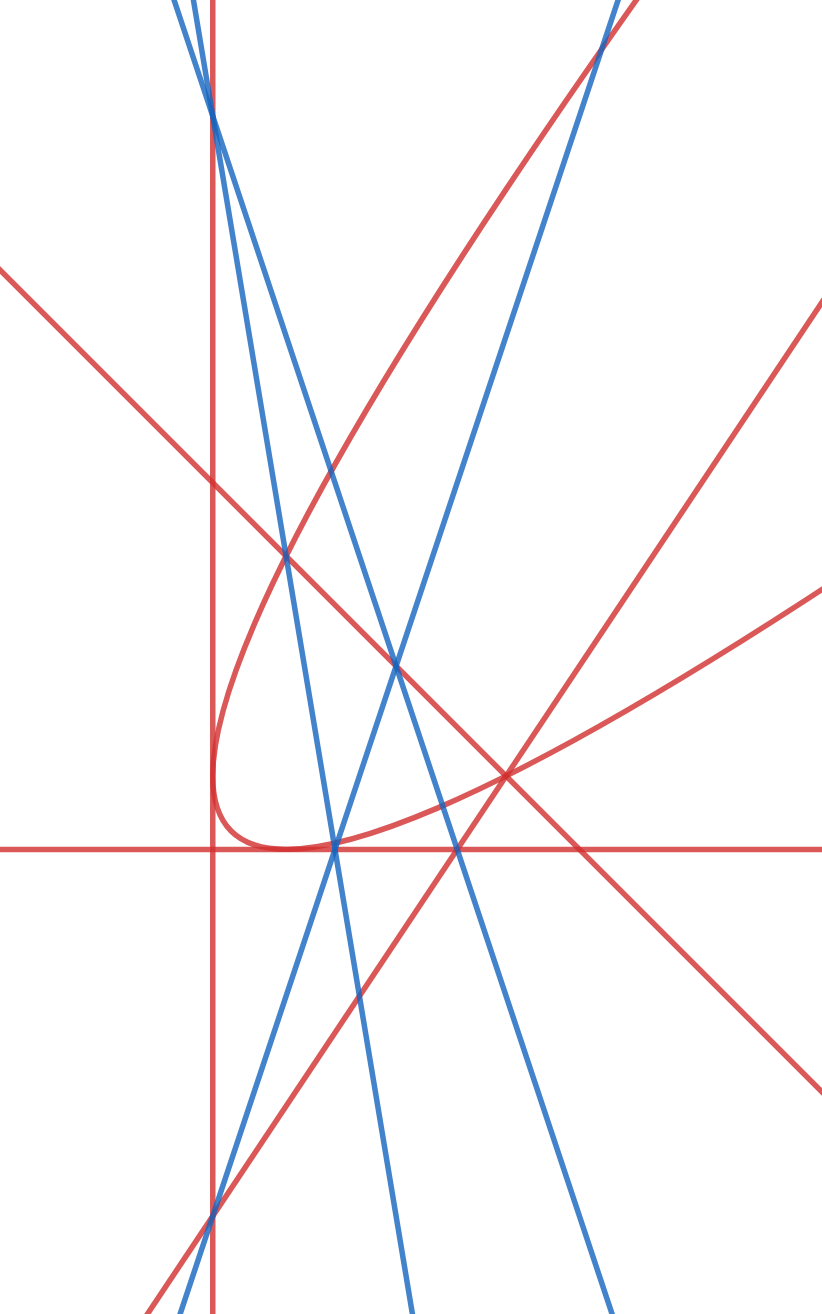}
	\caption{The first Zariski pair. \label{fig:ZP1}}
\end{figure}

\begin{theorem}\label{thm:ZP1}
	There is no homeomorphism $h:\CC\PP^2\rightarrow\CC\PP^2$ such that $h(\C \cup \B^1)=\C \cup \B^2$.
\end{theorem}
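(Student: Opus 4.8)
The plan is to distinguish the two embedded topologies through the connected number of the triangle $\C$ relative to the double cover branched along $\B^i$, invoking the invariance recorded in Theorem~\ref{thm:invariance}. Accordingly, I would take $B=\B^i$ and treat $\C$ as the test curve. Since $\B^i$ is the union of the conic~$C$ with the four lines $L_4,L^1_5,L^1_6,L^1_7$ (resp.\ $L_4,L^2_5,L^2_6,L^2_7$), it has even degree~$6$, so the surjection $\theta_i\colon\pi_1(\CC\PP^2\setminus\B^i)\twoheadrightarrow\ZZ_2$ sending every meridian to~$1$ is well defined; let $\Phi_{\B^i}$ denote the associated $\ZZ_2$-cover. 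Arguing by contradiction, assume there is a homeomorphism $h$ with $h(\C\cup\B^1)=\C\cup\B^2$. Such an $h$ carries irreducible components to irreducible components of equal degree, because $h_*$ acts on $H_2(\CC\PP^2)\cong\ZZ$ and the fundamental class of a degree-$d$ component is $d$ times the positive generator; in particular $h(C)=C$. Moreover $h$ preserves the local types of the singular points and the incidences between components. A brief combinatorial inspection then recovers the partition of the lines: $L_1,L_2$ are the only lines tangent to~$C$ (the unique tacnodes), and $L_3$ is the unique line incident to both triple points lying on~$C$; hence $h(\C)=\C$ and $h(\B^1)=\B^2$. As we work with $\ZZ_2$ and the canonical $\theta_i$, the automorphism $\sigma$ must be the identity and the compatibility $\sigma\circ\theta_2\circ h_*=\theta_1$ holds automatically. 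Theorem~\ref{thm:invariance} then gives $c_{\Phi_{\B^1}}(\C)=c_{\Phi_{\B^2}}(\C)$, so it suffices to prove that these two numbers differ.

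Next I would translate each $c_{\Phi_{\B^i}}(\C)$ into a statement about cubics via Proposition~\ref{propo:computation} and Remark~\ref{rmk:connected_numbers}. The hypotheses are readily checked: $\B^i$ has even degree, the curve $\C$ is nodal with smooth components (its only singularities are the three vertices of the triangle), $\C\setminus\B^i$ is connected, and $\B^i$ meets $\C$ only away from these vertices. A line-by-line count shows that $\B^i$ meets each $L_j$ in exactly three points---so the nine points of $\C\cap\B^i$ are precisely the seven triple points and the two tacnodes---each with local intersection multiplicity~$2$: at each triple point the relevant line of $\C$ crosses the two branches of $\B^i$ transversally, and at the two tacnodes a line of $\C$ is tangent to~$C$ to order two. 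These nine points of multiplicity~$2$ realize the Bézout number $3\cdot 6=18$. By Remark~\ref{rmk:connected_numbers}, $c_{\Phi_{\B^i}}(\C)=2$ exactly when some cubic passes through these nine points without containing any of $L_1,L_2,L_3$, and $c_{\Phi_{\B^i}}(\C)=1$ otherwise.

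Before computing, I would sharpen the dichotomy. The product $L_1L_2L_3$ is always a cubic through the nine points, but it is excluded since it contains every component of~$\C$. Writing $V_i$ for the linear system of cubics through the nine points of $\C\cap\B^i$, one has $L_1L_2L_3\in V_i$; if $\dim V_i=1$ then no admissible cubic exists and $c_{\Phi_{\B^i}}(\C)=1$, whereas if $\dim V_i\ge 2$ then, because a $\CC$-vector space is never a finite union of proper subspaces, $V_i$ contains a cubic avoiding all three lines (the cubics of $V_i$ containing a fixed $L_j$ form a proper subspace, as three collinear base points on $L_j$ do not force containment of $L_j$). Hence $c_{\Phi_{\B^i}}(\C)=2$ if and only if the nine points impose at most eight conditions on cubics, i.e.\ lie on a pencil. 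Everything thus reduces to deciding, for each~$i$, whether $\C\cap\B^i$ lies on a cubic distinct from $L_1L_2L_3$.

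This last comparison is the heart of the argument and the step I expect to be the main obstacle. I would compute the coordinates of the nine points for each~$i$ directly from the given equations, evaluate the ten cubic monomials at them to build a $9\times 10$ matrix, and read off its rank: rank~$9$ forces $c_{\Phi_{\B^i}}(\C)=1$, while rank~$8$ gives $c_{\Phi_{\B^i}}(\C)=2$. The expected outcome is that the two arrangements fall on opposite sides of this dichotomy---say $c_{\Phi_{\B^1}}(\C)=2$, certified by an explicit cubic through the nine points of $\C\cap\B^1$, and $c_{\Phi_{\B^2}}(\C)=1$---contradicting the equality forced by Theorem~\ref{thm:invariance} and proving that no such~$h$ exists. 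The genuine difficulty is to carry out this rank computation reliably and to exhibit the distinguishing cubic in the case where it exists.
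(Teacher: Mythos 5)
Your proposal follows essentially the same route as the paper: force $h(\C)=\C$ and $h(\B^1)=\B^2$ by combinatorial rigidity (unique conic, tangent lines, and a characterization of $L_3$), then distinguish $c_{\Phi_{\B^1}}(\C)$ from $c_{\Phi_{\B^2}}(\C)$ via Proposition~\ref{propo:computation} and Remark~\ref{rmk:connected_numbers} by computing the rank of the $9\times 10$ matrix of cubic monomials at the nine intersection points. The remaining step you flag---the explicit rank computation showing the two arrangements land on opposite sides of the dichotomy---is exactly the ``direct computation'' the paper invokes, and your intermediate argument ruling out the cubics containing one of the $L_j$ is, if anything, spelled out more carefully than in the paper.
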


\begin{proof}
	Since~$\C$ is the union of three lines in generic position, then it is a nodal curve. Then, the curves~$B_i$ are of even degree and all the intersection points between~$\C$ and~$\B_i$ are outside this nodes. So one can apply Proposition~\ref{propo:computation}. Furthermore, since the intersection points between $B_i$ and $C$ are triple points and tacnodes, then the local intersection multiplicity is~$2$. So, we are under the hypothesis of Remark~\ref{rmk:connected_numbers}. Thus, we have to study the space of degree $\deg (\B^i)/2 = 3$ curves passing through the intersection points of~$\C$ and~$\B^i$, that is, through the $7$~triple points and the $2$~tacnodes listed above.

	We denote them by $P_1,\dots,P_9$, and by $[p_i,q_i,r_i]$ their respective projective coordinates. The dimension of that space is equal to the corank minus~$1$ of the $9 \times 10$ matrix formed by the vectors $(p_i^a q_i^b r_i^c)_{a+b+c=3}$, for $i\in\{1,\dots,9\}$.\footnote{The term~$-1$ is due to the fact that the equation of a curve is well-defined up to scalar.} A direct computation shows that this dimension is~$1$ for $\C \cup \B^1$ and~$0$ for $\C \cup \B^2$. Since $P_1, \dots , P_9$ are on~$\C$ which is of degree~$3$, it implies that for $\C\cup\B^2$, there is no curve of degree~$3$ which pass through these $9$ points and which does not contain $L_i$ for $i\in\{1,2,3\}$, while such a curve exist for $\C\cup\B^1$. By Remark~\ref{rmk:connected_numbers}, we deduce that $c_{\Phi_{\B^1}}(\C) \neq c_{\Phi_{\B^2}}(\C)$.

	Let assume that there exists a homeomorphism $h$ of $\CC\PP^2$ such that $h(\C \cup \B^1) = \C \cup \B^2$. Since $L_1$ and $L_2$ are the only tangent lines of the unique conic $C$ of $\C \cup \B^i$, then $h(L_1 \cup L_2) = L_1 \cup L_2$. Then, one remark that $L_3$ is the only line intersecting $L_1 \cup L_2$ into ordinary double points, so $h(L_3) = L_3$. As a consequence, $h$ should verifies $h(\C) = \C$ and $h(\B^1) = \B^2$ which is in contradiction with Theorem~\ref{thm:invariance}.
\end{proof}

\begin{corollary}
	The curves $\C\cup\B^1$ and $\C\cup\B^2$ form a Zariski pair of $(7,1)$-arrangements.
\end{corollary}

\subsection{Second example}

For this second example, we still consider the conic~$C$ defined by $x^2 + y^2 + z^2 - 2xy - 2xz - 2yz = 0$. Then, we take the four lines $L_1: x + 2y - 4z = 0$, $L_2: x = 0$, $L_3: y = 0$, and $L_4: x - y + 2z = 0$. Last, we have the two sets of three lines:\\[5pt]
\indent
\begin{tabular}{cllll}
	$\bullet$	& $L^1_5: 2x + 2y - 5z = 0$, \quad 	& $L^1_6:x + 7y - 4z = 0$ \quad 	& and \quad 	& $L^1_7: 2x - 2y + z = 0$,\\
	$\bullet$	& $L^2_5: x + 3y - 7z = 0$, \quad 	& $L^2_6: x - 4z = 0$ \quad 	& and \quad 	& $L^2_7: x - y + 5z = 0$.\\[5pt]
\end{tabular}

We set~$\C$ the curve formed by~$C$ and~$L_1$, and we denoted by~$\B^1$ (resp.~$\B^2$) the curve formed by $L_2$, $L_3$, $L_4$ together with $L^1_5$, $L^1_6$ and $L^1_7$ (resp. $L^2_5$, $L^2_6$ and $L^2_7$).

\medskip

The common combinatorics of the curves $\C \cup \B^i$, pictured in Figure~\ref{fig:ZP2}, is given by the singular points below. As for the first example, the missing intersections between irreducible components are ordinary double points.\\[5pt]
\indent
\begin{tabular}{cll}
	$\bullet$	& Triple points:	& $\{L_1,L_2,L_4\}$, $\{L_1,L_3,L^i_6\}$, $\{L_1,L^i_5,L^i_7\}$, $\{L_4,L^i_7,C\}$, $\{L_4,L^i_5,C\}$, \\
					&							& $\{L^i_6,L^i_7,C\}$, $\{L^i_5,L^i_6,C\}$,\\
	$\bullet$	& Tacnodes:  		& $\{L_2,C\}$, $\{L_3,C\}$.\\[5pt]
\end{tabular}

\begin{figure}[h!]
	\includegraphics[scale=0.11]{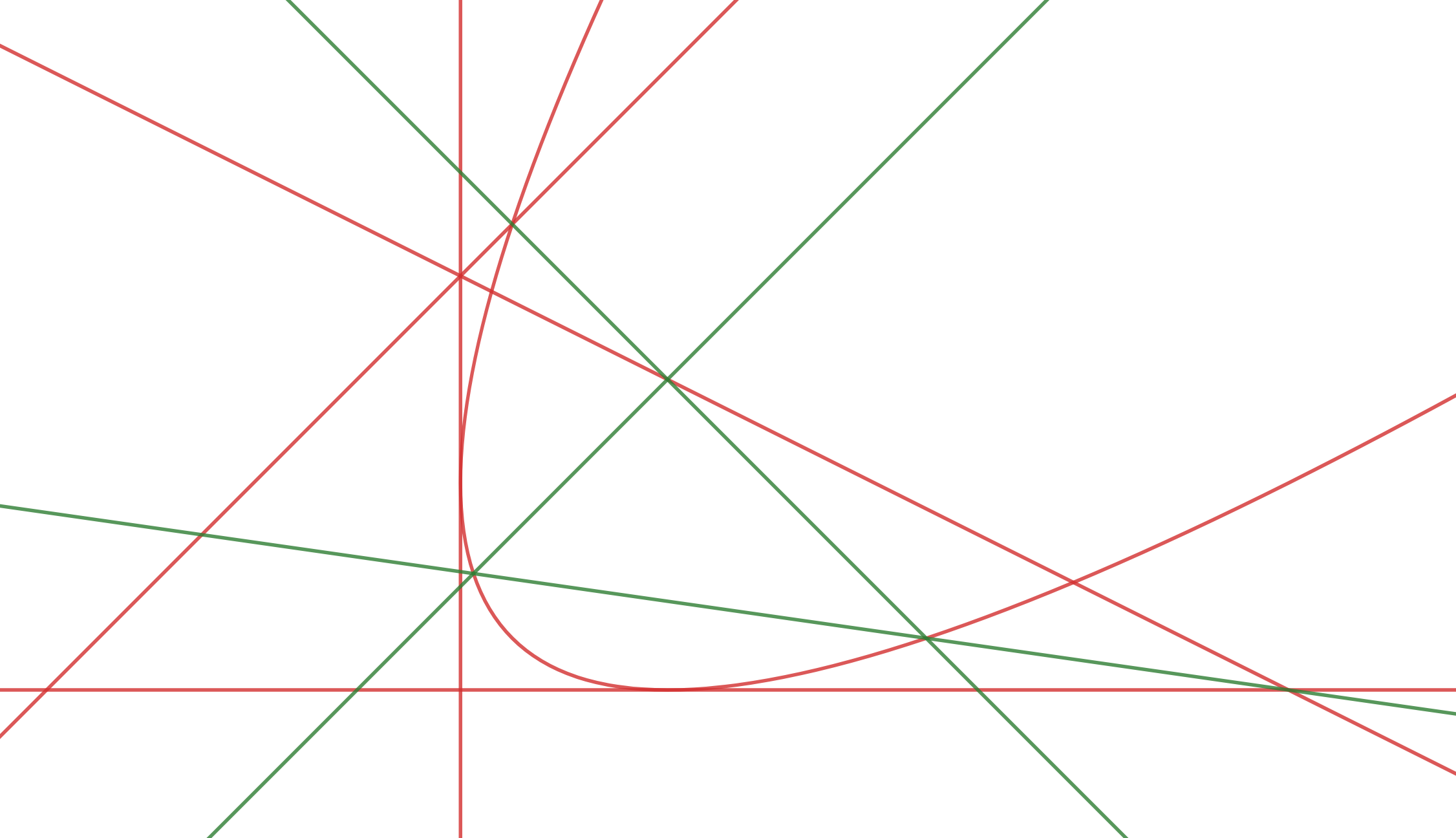}
	\hspace{1cm}
	\includegraphics[scale=0.11]{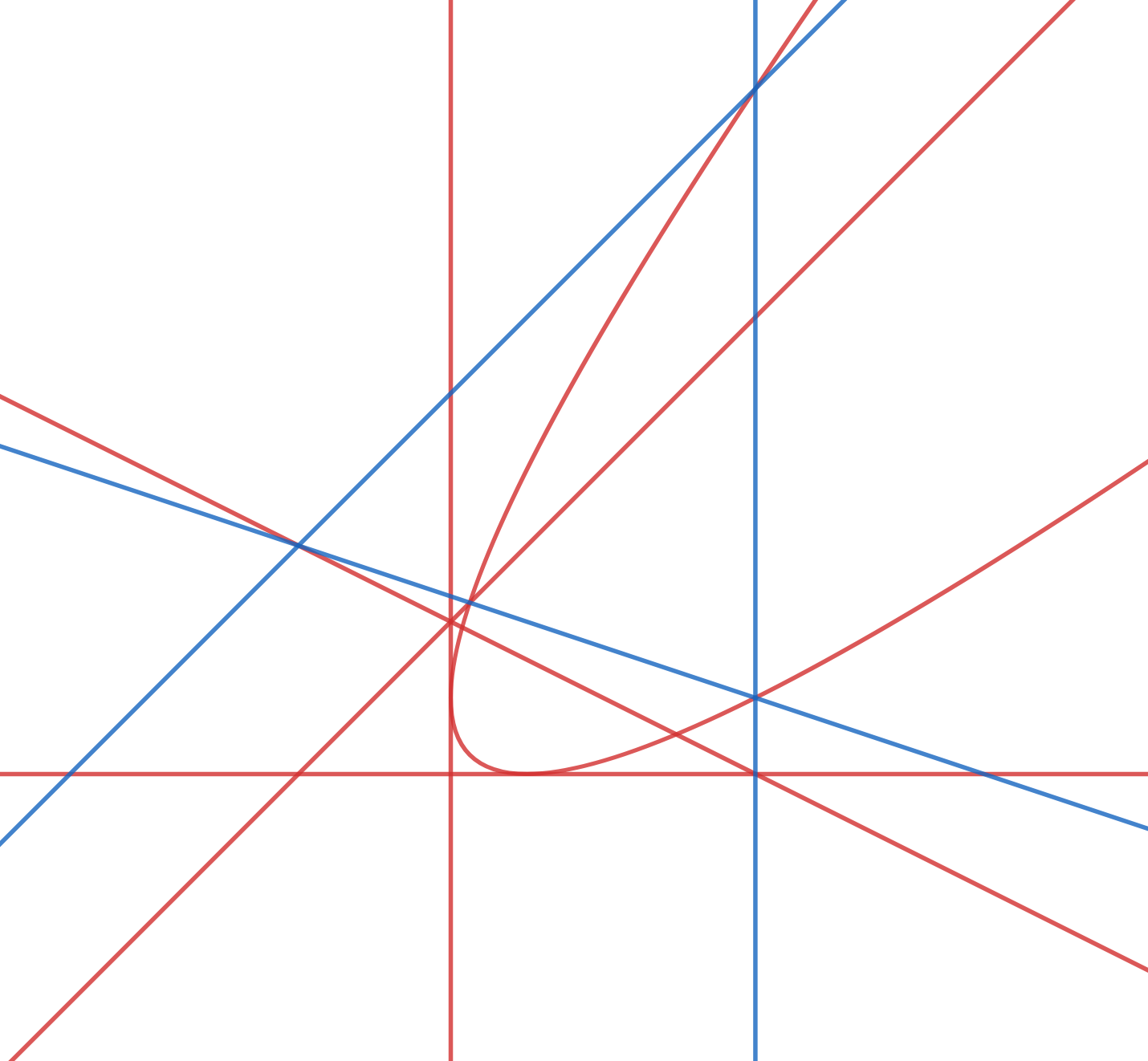}
	\caption{The second Zariski pair. \label{fig:ZP2}}
\end{figure}

\begin{theorem}\label{thm:ZP2}
	There is no homeomorphism $h:\CC\PP^2\rightarrow\CC\PP^2$ such that $h(\C \cup \B^1)=\C \cup \B^2$.
\end{theorem}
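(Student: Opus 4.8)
The plan is to follow the strategy of the proof of Theorem~\ref{thm:ZP1}, with the roles of the components adapted to the present configuration: here $\C = C \cup L_1$ plays the part of the nodal test curve and $\B^i$ that of the even-degree branch locus. First I would verify the hypotheses of Proposition~\ref{propo:computation} and Remark~\ref{rmk:connected_numbers}. The conic $C$ meets $L_1$ in two points which, not appearing among the listed tacnodes or triple points, are nodes; hence $\C$ is nodal with smooth irreducible components. Each $\B^i$ is a union of six lines, so $\deg \B^i = 6$ is even, and the two nodes of $\C$ lie on $C \cap L_1$ only, hence are disjoint from $\B^i$. Finally, at every point of $\B^i \cap \C$ the local intersection multiplicity of $\B^i$ with $\C$ equals $2$: at the tacnodes $\{L_2,C\}$ and $\{L_3,C\}$ by tangency, and at each triple point because exactly two lines of $\B^i$ pass through it, each transversal to the relevant component of $\C$. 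Thus Remark~\ref{rmk:connected_numbers} applies and the computation of $c_{\Phi_{\B^i}}(\C)$ reduces to deciding, for each $i$, whether there is a cubic through the nine points of $\B^i \cap \C$ that contains neither $C$ nor $L_1$.

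These nine points split as six points lying on $C$ (namely $\{L_4,L^i_7,C\}$, $\{L_4,L^i_5,C\}$, $\{L^i_6,L^i_7,C\}$, $\{L^i_5,L^i_6,C\}$ and the two tacnodes) and three points lying on $L_1$ (namely $\{L_1,L_2,L_4\}$, $\{L_1,L_3,L^i_6\}$ and $\{L_1,L^i_5,L^i_7\}$). The crucial observation is that $\C = C \cup L_1$ is itself a cubic through all nine points, and it is the \emph{only} such cubic containing a component of $\C$: a cubic containing $C$ must be $C \cup \ell$ and must pass through the three collinear points on $L_1$, forcing $\ell = L_1$; a cubic containing $L_1$ must be $L_1 \cup Q$ with $Q$ a conic through six points of $C$, which by B\'ezout forces $Q = C$. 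Consequently a cubic $\D$ as in Remark~\ref{rmk:connected_numbers} exists if and only if the linear system of cubics through the nine points has positive dimension.

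To settle this I would compute, exactly as in Theorem~\ref{thm:ZP1}, the corank of the $9 \times 10$ matrix with rows $(p_i^a q_i^b r_i^c)_{a+b+c=3}$, where $[p_i,q_i,r_i]$ are the projective coordinates of the nine points. The explicit equations reduce this to a direct linear algebra computation, which I expect to produce a one-dimensional system for one arrangement and a zero-dimensional system (reduced to $\C$ alone) for the other. By the dichotomy above together with Proposition~\ref{propo:computation}, this yields $c_{\Phi_{\B^1}}(\C) \neq c_{\Phi_{\B^2}}(\C)$.

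It then remains to upgrade this numerical difference to the non-existence of a homeomorphism, and this is the step demanding the most care. Suppose $h:\CC\PP^2 \to \CC\PP^2$ satisfies $h(\C \cup \B^1) = \C \cup \B^2$. Since $h$ carries irreducible components to irreducible components and preserves their homology classes in $H_2(\CC\PP^2)$ up to a global sign, it cannot exchange the conic with a line; as $C$ is the unique degree-$2$ component, $h(C) = C$. To separate $L_1$ from the six lines of $\B^i$, I would use that $L_1$ is the unique line meeting $C$ only in nodes: the tangent lines $L_2, L_3$ meet $C$ at tacnodes, while $L_4, L^i_5, L^i_6, L^i_7$ each meet $C$ at triple points, and $h$ preserves the type of each singularity together with the set of components through it. Hence $h(L_1) = L_1$, so $h(\C) = \C$ and therefore $h(\B^1) = \B^2$. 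With the $\ZZ_2$-surjections attached to the branch loci (for which the only automorphism of $\ZZ_2$ is the identity), this contradicts Theorem~\ref{thm:invariance}. The main obstacle is thus the purely topological characterization of $L_1$, on which the whole reduction to Theorem~\ref{thm:invariance} rests.
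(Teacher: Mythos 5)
Your proposal is correct and follows essentially the same route as the paper: verify the hypotheses of Proposition~\ref{propo:computation} and Remark~\ref{rmk:connected_numbers}, compute the dimension of the linear system of cubics through the nine points of $\B^i\cap\C$ to conclude $c_{\Phi_{\B^1}}(\C)\neq c_{\Phi_{\B^2}}(\C)$, and then pin down $h(\C)=\C$ and $h(\B^1)=\B^2$ using the uniqueness of the conic and of the line meeting $C$ only in ordinary double points, contradicting Theorem~\ref{thm:invariance}. Your identification of that distinguished line as $L_1$ is the right one (the paper's text names $L_3$ here, which is evidently a slip since $L_3$ is tangent to $C$).
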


\begin{proof}
	We apply the same strategy as for Theorem~\ref{thm:ZP1}. The curve~$\C$ is formed by a smooth conic and a transversal line, so it is a nodal curve. As the intersection points with~$\B^i$ are outside the nodes and the local intersection multiplicities are~$2$, one can apply Remark~\ref{rmk:connected_numbers}. The dimension of the space of curves passing through this $9$~intersection points is~$0$ for $\C \cup \B^1$ and~$1$ for $\C \cup \B^2$. So, $c_{\Phi_{\B^1}}(\C) \neq c_{\Phi_{\B^2}}(\C)$.

	Assume that $h$ is a homeomorphism of $\CC\PP^2$ such that $h(\C \cup \B^1) = \C \cup \B^2$. Since $C$ is the unique conic, and $L_3$ is the only line which intersects $C$ in ordinary double points, then one has $h(\C) = \C$ and $h(\B^1) = \B^2$. This is in contradiction with Theorem~\ref{thm:invariance}.
\end{proof}

\begin{corollary}
	The curves $\C\cup\B^1$ and $\C\cup\B^2$ form a Zariski pair of $(7,1)$-arrangements.
\end{corollary}

\subsection{Minimality}

Let us denote by $\R(\A)$ the realization space of the combinatorics of $\A$. In other words, $\R(\A)$ is the set of all algebraic curves combinatorially equivalent to $\A$. One can adapt~\cite[Proposition~3.3]{NazYos} to the case of $(k,1)$-arrangements as follow --the proof is straightforward and left to the reader.

\begin{proposition}\label{propo:connected_moduli}
	Let $\A$ be a $(k,1)$-arrangement with lines labelled $L_1,\dots,L_k$. We assume that there exists $t_0\in\{1,\dots,k-1\}$ such that $L_t$ intersects transversally the conic of $\A$, for all $t\geq t_0$. For $t\in\{t_0,\dots,k\}$, we define $n_t$ has the number of intersection points between $L_t$ and $\A_t = \A \setminus \{ L_t, \dots, L_k \}$ with a local intersection multipliticty greater or equal than~$2$. If $\R(\A_{t_0})$ is irreducible and, for all $t\in\{t_0,\dots,k\}$, we have $n_t \leq 2$, then $\R(\A)$ is irreducible too and so it is connected.
\end{proposition}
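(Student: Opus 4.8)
The plan is to prove that $\R(\A)$ is irreducible by induction on the lines, building $\A$ up from $\A_{t_0}$ by adjoining $L_{t_0}, L_{t_0+1},\dots,L_k$ one at a time. Writing $\A_{t+1}=\A_t\cup\{L_t\}$ and $\A_{k+1}=\A$, the base case is precisely the hypothesis that $\R(\A_{t_0})$ is irreducible, so it suffices to establish the inductive step: for each $t\in\{t_0,\dots,k\}$, if $\R(\A_t)$ is irreducible then so is $\R(\A_{t+1})$. The mechanism of the step is the forgetful map $\R(\A_{t+1})\to\R(\A_t)$ that deletes the component $L_t$. Since $\A_t$ is the combinatorial deletion of $L_t$ from $\A_{t+1}$, removing $L_t$ from any realization of $\A_{t+1}$ yields a realization of $\A_t$, so this map is well-defined and algebraic; I would exploit it by realizing $\R(\A_{t+1})$ as an open subset of a bundle over $\R(\A_t)$.

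First I would analyze, for a fixed $\A_t'\in\R(\A_t)$, the set of admissible positions of $L_t$. A line is a point of the dual plane $(\CC\PP^2)^\vee\cong\PP^2$, and the combinatorics of $\A_{t+1}$ prescribes exactly which incidences $L_t$ must realize. The $n_t$ intersection points of local multiplicity $\geq 2$ force $L_t$ to pass through $n_t$ specific singular points of $\A_t'$, and ``passing through a prescribed point'' is a single linear condition on $(\CC\PP^2)^\vee$. Here the hypothesis that $L_t$ meets the conic transversally (valid for all $t\geq t_0$) is essential: it rules out any tangency to the conic, so that every multiplicity-$\geq 2$ constraint is \emph{linear} rather than the quadratic dual-conic condition a tangency would impose. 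Hence the locus of admissible lines inside $(\CC\PP^2)^\vee$ is the linear subspace $\Lambda$ cut out by these $n_t\leq 2$ equations, of dimension $2-n_t\in\{2,1,0\}$: all of $(\CC\PP^2)^\vee$, a pencil $\PP^1$, or a single point. The constraint $n_t\leq 2$ is exactly what keeps this dimension nonnegative and constant along the base, and I expect this to be the crux: three or more independent linear conditions would generically be inconsistent, destroying both nonemptiness and the constant fibre dimension used below.

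Next I would globalize this fibrewise picture. As $\A_t'$ varies over $\R(\A_t)$, the $n_t$ prescribed points move algebraically, so the subspaces $\Lambda$ assemble into a subvariety $\tilde\Lambda\subset\R(\A_t)\times(\CC\PP^2)^\vee$ that fibers over $\R(\A_t)$ with all fibres isomorphic to $\PP^{2-n_t}$: the trivial product when $n_t=0$, the pullback of the universal line (a $\PP^1$-bundle) when $n_t=1$, and the graph of the section sending a realization to the unique line through its two prescribed points when $n_t=2$. Being a projective bundle over the irreducible base $\R(\A_t)$, the variety $\tilde\Lambda$ is irreducible. Finally I would identify $\R(\A_{t+1})$ with the locus in $\tilde\Lambda$ on which all remaining open conditions hold: transversality of $L_t$ with the conic, distinctness of its intersection points, and the absence of every incidence not prescribed by the combinatorics of $\A_{t+1}$. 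These are complements of finitely many closed conditions, so $\R(\A_{t+1})$ is a Zariski-open subset of $\tilde\Lambda$, and it is nonempty because $\A$, hence each truncation $\A_{t+1}$, is realizable. A nonempty open subset of an irreducible variety is irreducible, which closes the induction.

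The main obstacle I foresee, beyond the bookkeeping of which incidences the combinatorics prescribes at each stage, is making rigorous the claim that $\tilde\Lambda$ is a genuine (locally trivial) projective bundle. One must confirm that the $n_t$ prescribed points depend algebraically on $\A_t'$ and, in the case $n_t=2$, that they stay distinct over the relevant locus so that the line through them is well-defined and the section is regular. Once this local triviality and the constant fibre dimension $2-n_t$ afforded by $n_t\leq 2$ are secured, the propagation of irreducibility from $\R(\A_t)$ to $\R(\A_{t+1})$ is formal, and connectedness follows since irreducibility implies connectedness.
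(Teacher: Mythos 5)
Your proof is correct and is essentially the intended one: the paper leaves the argument to the reader as a straightforward adaptation of \cite[Proposition~3.3]{NazYos}, and that adaptation is precisely your induction realizing $\R(\A_{t+1})$ as a nonempty open subset of a $\PP^{2-n_t}$-fibration over $\R(\A_t)$, with the transversality hypothesis guaranteeing that the $n_t\leq 2$ incidence constraints are linear in the dual plane. The one technical point you flag (local triviality of $\tilde\Lambda$) can also be bypassed by noting that the projection $\tilde\Lambda\to\R(\A_t)$ is proper and surjective with irreducible fibres of constant dimension over an irreducible base, which already forces $\tilde\Lambda$ to be irreducible.
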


\begin{remark}\label{rmk:connected_moduli}
	If $\A$ is the $(k,1)$-arrangement formed by a conic and one or two tangent lines, then the realization space $\R(\A)$ is irreducible.
\end{remark}

Using this proposition above, one can prove the following theorem.

\begin{theorem}
	The two Zariski pairs described above are minimal.
\end{theorem}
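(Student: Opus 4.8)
The plan is to establish minimality by showing that \emph{every} proper sub-curve of $\C\cup\B^i$ has a connected realization space, so that all curves sharing its combinatorics have the same embedded topology. Indeed, an irreducible variety is path-connected, and the members of a connected equisingular family are all ambient-isotopic; hence two reduced curves with the same combinatorics and connected realization space cannot form a Zariski pair. Since $\C\cup\B^1$ and $\C\cup\B^2$ share their combinatorics, their sub-curves obtained by deleting corresponding components have matching combinatorial types, so it suffices to treat each proper sub-curve once. I would split them into two families depending on whether they contain the conic~$C$.

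First, a proper sub-curve that does not contain~$C$ is a line arrangement with at most seven lines. By the bound of Nazir--Yoshinaga and Fei recalled in the introduction~\cite{NazYos,Fei}, a Zariski pair of line arrangements requires at least ten lines; consequently any two line arrangements with at most seven lines and the same combinatorics are isotopic, and no such sub-curve can be part of a Zariski pair.

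Second, a proper sub-curve containing~$C$ is a $(k,1)$-arrangement with $k\le 6$, and I would treat it via Proposition~\ref{propo:connected_moduli} together with Remark~\ref{rmk:connected_moduli}. The idea is to order the lines so that the (at most two) lines tangent to~$C$ come first, so that $\A_{t_0}$ is~$C$ together with those tangent lines; then $\R(\A_{t_0})$ is irreducible by Remark~\ref{rmk:connected_moduli}, while the cases of no tangent line (base~$=C$) or of at most one line are irreducible for trivial reasons. It then remains to order the lines transversal to~$C$ so that $n_t\le 2$ at each step. The useful observation is that $n_t$ equals the number of triple points $\{L_t,X,Y\}$ of the ambient arrangement for which both $X$ and $Y$ already lie in~$\A_t$; reading off the combinatorics, every line transversal to~$C$ in either example lies on \emph{exactly three} triple points, so the only way the hypothesis can fail is if some transversal line has all three of its triple points already ``active'' when it is inserted.

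To control the case analysis I would use the following monotonicity: if $\A=C\cup S$ admits an admissible ordering, then so does every $\A'=C\cup S'$ with $S'\subseteq S$, since restricting the ordering only shrinks the base and removes potential partner components, so each $n_t$ can only decrease. As every proper sub-arrangement containing~$C$ omits at least one line, it is contained in one of the seven maximal sub-arrangements obtained by deleting a single line; thus it suffices to produce an admissible ordering for those seven arrangements in each example, and the symmetries of the configurations, where present, cut this down further. The main obstacle is exactly this finite verification: when the deleted line is tangent to~$C$, the transversal lines incident to it drop to at most two active triple points, but a line whose three triple points involve only~$C$ and other transversal lines (such as $L_3$ in the first example) still carries all three and must be inserted first, after which one checks that a line of active degree at most two can be peeled off at every subsequent step without getting stuck. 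Recording these explicit orderings and feeding them into Proposition~\ref{propo:connected_moduli} yields the irreducibility, hence connectedness, of the realization space of every proper sub-curve containing~$C$; combined with the line-arrangement case this establishes that neither Zariski pair admits a proper sub-curve Zariski pair, i.e.\ both are minimal.
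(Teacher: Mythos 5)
Your proposal is correct and follows essentially the same route as the paper: Nazir--Yoshinaga for the sub-curves that are pure line arrangements, and Proposition~\ref{propo:connected_moduli} with Remark~\ref{rmk:connected_moduli} applied to the seven maximal conic-containing sub-arrangements for the rest. Your explicit monotonicity observation is just a cleaner formalization of the paper's ``by an iteration of these arguments'' step for the deeper sub-curves.
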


\begin{proof}
	Let denote by $\A_1$ and $\A_2$ one of the examples of Zariski pair above. The arrangements $\A_1\setminus C$ and $\A_2\setminus C$ are line arrangements with $7$~lines. Due to the classification obtained by Nazir and Yoshinaga~\cite{NazYos}, we know that there is no Zariski pair of $7$~lines. By a case-by-case study, we have that, for all $i\in\{1,\dots,7\}$, the $(6,1)$-arrangements $\A_1\setminus L_i$ and $\A_2\setminus L_i$ verify the condition of Proposition~\ref{propo:connected_moduli} --up to a relabelling of the lines. Using Remark~\ref{rmk:connected_moduli}, we deduce that the realization space $\R(\A_1\setminus L_i)=\R(\A_2\setminus L_i)$ is connected. So, $\A_1\setminus L_i$ and $\A_2\setminus L_i$ do not form a Zariski pair. By an iteration of these arguments, we obtain that no proper subarrangements of $\A_1$ and $\A_2$ form a Zariski pair.
\end{proof}

\section{Discussion}

\subsection{Difference between the two examples}

The combinatorics of these pairs can be distinguish from each other as follow. In each one, the conic is unique so an equivalence in the combinatorics should preserve the number of singular points of each type on the conic. In the first example, the conic contains two triples points while it has four in the second one. Equivalently, one could compare the number of double points on the conic.

\subsection{Other methods of distinction}

In the two examples presented, we distinguish the embedded topology using the connected number. It is an invariant of the embedded topology and so it does not provide any information one the complements and their fundemantal groups. Moreover, due to~\cite[Remark~2.2~(iii)]{Shi:connected_numbers}, we know that the connected number is not determined by the fundamental group of the complement.
\begin{question}
	Let~$\A_1$ and~$\A_2$ be one of the previous Zariski pairs, are the fundamental groups $\pi_1(\CC\PP^2\setminus \A_i)$, for $i\in\{1,2\}$, isomorphic?
\end{question}

\begin{remark}
	By~\cite{GueShi}, the Zariski pairs presented here are also distinguished by the linking invariant developped by the second author and Meilhan in~\cite{GueMei}.
\end{remark}

\subsection{Zariski pair of $(k_1,k_2)$-arrangements of minimal degree for fixed $k_2$}

In~\cite{Oka}, Oka presented numerous Zariski pairs of degree~$6$, in particular, there is an example of a Zariski pair of $(0,3)$-arrangements, proving that the minimal degree for such pair is~$6$. For $(k,2)$-arrangement, the example of Tokunaga~\cite{Tok:elliptic_surfaces} implies that the minimal degree of such pairs is~$6$ or $7$. By the work of Nazir and Yoshinaga~\cite{NazYos} and Fei~\cite{Fei}, we know that the minimal degree for a line arrangement is greater than or equal to~$10$, and by Artal, Carmona, Cogolludo and Marco, we have that it is a most~$11$ --see also~\cite{Gue:LLN}. The examples provided in this note prove that the minimal degree for a Zariski pair of $(k,1)$-arrangements is lower than or equal to $9$.
\begin{question}
	Are there Zariski pairs of $(k,1)$-arrangements for $k\in\{5,6\}$?
\end{question}
·


\bibliographystyle{plain}
\bibliography{biblio}

\end{document}